\newtheorem{theorem}{Theorem}[section]
\newtheorem{lemma}[theorem]{Lemma}
\newtheorem{corollary}[theorem]{Corollary}
\newtheorem{observation}[theorem]{Observation}
\theoremstyle{definition}
\newtheorem{definition}[theorem]{Definition}
\newtheorem{case}{\indent Case}[section]
\newtheorem{subcase}{\indent Subcase}[case]
\begin{document}
	\title{\bf An Ore-type condition for hamiltonicity in graphs}
	\author{\bf Chengli Li\footnote{Email: lichengli0130@126.com.}}
	\author{\bf Feng Liu\footnote{Email: liufeng0609@126.com (corresponding author).}}
	
	\affil{ Department of Mathematics,
		East China Normal University, Shanghai, 200241, China}
	\date{}
	\maketitle
\begin{abstract}
The bipartite-hole-number of a graph $G$, denoted as $\widetilde{\alpha}(G)$, is the minimum number $k$ such that there exist positive integers $s$ and $t$ with $s+t=k+1$ with the property that for any two disjoint sets $A,B\subseteq V(G)$ with $|A|=s$ and $|B|=t$, there is an edge between $A$ and $B$. In this paper, based on Ore-type conditions, we show that if a graph $G$ is 2-connected and the degree sum of any two nonadjacent vertices in $G$ is at least $ 2\widetilde{\alpha}(G)$, then $G$ is hamiltonian. Furthermore, we prove that if $G$ is 3-connected and the degree sum of any two nonadjacent vertices in $G$ is at least $ 2\widetilde{\alpha}(G)+1$, then $G$ is hamiltonian-connected.

\smallskip
\noindent{\bf Keywords:} Hamiltonian; hamiltonian-connected;  bipartite-hole-number
			
\smallskip
\noindent{\bf AMS Subject Classification:} 05C45, 05C38
\end{abstract}
	
\section{Introduction}
We consider finite simple  graphs, and use standard terminology and notations from \cite{Bondy2008, West1996} throughout this article. We denote by $V(G)$ and $E(G)$ the vertex set and edge set of a graph $G,$ respectively, and denote by $|G|$ and $e(G)$ the order and size of $G,$ respectively. For a vertex $x\in V(G)$ and a subgraph $H$ of $G$, $N_H(x)$ denotes  the set of neighbors of $x$ that are contained in $V(H)$. For a vertex subset $S\subseteq V(G)$, define $N_G(S)=\cup_{x\in S}N_G(x)\setminus S$ and $N_H(S)=N_G(S)\cap V(H)$. If $F$ is a subgraph of $G$, we write $N_F(H)$ for $N_F(V(H))$. We use $G[S]$ to denote the subgraph of $G$ induced by $S$, and let $G-S=G[V(G)\setminus S]$. Given two vertex subsets $S$ and $T$ of $G$, we denote by $[S,T]$ the set of edges having one endpoint in $S$ and the other in $T$ of $G$. For a positive integer $k$, the symbol $[k]$ used in this article represents the set $\{1,2,\ldots,k\}$. Furthermore, for integers $a$ and $b$ with $a\le b,$ we use $[a, \,b]$ to denote the set of those integers $c$ satisfying $a\le c\le b.$
The subscript $G$ will be omitted in all the notation above if no confusion may arise. 

Define $\sigma_2(G)=\min \{d_G(u)+d_G(v)~:~ u,v\in V(G)~\text{and}~ u\nsim v\}$ if $G$ is not a complete graph, and define $\sigma_2(G)=\infty$ otherwise.

For two distinct vertices $x$ and $y$ in $G$, an $(x, y)$-path is a path whose endpoints are $x$ and $y$.  Let $P$ be a path. We   use $P[u,v]$ to denote the subpath of $P$ between two  vertices $u$  and $v$. 

A Hamilton path in $G$ is a path containing every vertex of $G$.  A Hamilton  cycle in $G$ is a cycle containing every vertex of $G$. A graph $G$ is traceable if it contains a Hamilton path, and it is hamiltonian if it contains a Hamilton cycle. 
 
The classic  Dirac theorem from 1952 is as follows.
\begin{theorem}[Dirac \cite{Dirac1952}]
Let $G$ be a graph of order at least three. If $\delta(G)\geq \frac{n}{2}$, then $G$ is hamiltonian.
\end{theorem}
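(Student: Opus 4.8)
The plan is to argue via a longest path together with a pigeonhole/crossing argument, which is the classical route for results of Dirac type. First I would take a longest path $P = v_1 v_2 \cdots v_k$ in $G$. By maximality of $P$, every neighbor of the endpoint $v_1$ and every neighbor of the endpoint $v_k$ must already lie on $P$; otherwise one could prepend or append a vertex to obtain a strictly longer path. Since $\delta(G) \ge \frac{n}{2}$, this gives $d_G(v_1) \ge \frac{n}{2}$ and $d_G(v_k) \ge \frac{n}{2}$ neighbors, all distributed among the interior vertices of $P$.

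The key step is to produce a cycle $C$ that spans all of $V(P)$. To this end, record the index sets $A = \{\, i : v_1 v_{i+1} \in E(G)\,\}$ and $B = \{\, i : v_k v_i \in E(G)\,\}$, both regarded as subsets of $\{1, 2, \ldots, k-1\}$. Because all neighbors of $v_1$ and $v_k$ sit on $P$, we have $|A| = d_G(v_1) \ge \frac{n}{2}$ and $|B| = d_G(v_k) \ge \frac{n}{2}$, hence $|A| + |B| \ge n > k - 1 = |\{1,\dots,k-1\}|$. By the pigeonhole principle $A \cap B \ne \emptyset$, so some index $i$ satisfies $v_1 \sim v_{i+1}$ and $v_k \sim v_i$ simultaneously. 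The rotation $C = v_1 v_2 \cdots v_i v_k v_{k-1} \cdots v_{i+1} v_1$ is then a cycle containing exactly the vertices of $P$.

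Finally, I would upgrade $C$ to a Hamilton cycle by showing $V(C) = V(G)$. A graph with $\delta(G) \ge \frac{n}{2}$ is connected, so if some vertex $w$ lay outside $C$, there would be an edge joining $w$ to a vertex $u \in V(C)$. Cutting $C$ at an edge incident to $u$ turns $C$ into a spanning path of $V(C)$ ending at $u$, and appending $w$ produces a path on $k+1$ vertices, contradicting the maximality of $P$. Hence $V(C) = V(G)$ and $C$ is a Hamilton cycle, as required (the hypothesis $n \ge 3$ ensures $C$ is a genuine cycle).

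I expect the main obstacle to be the clean packaging of the crossing argument: one must choose the index sets $A$ and $B$ with the correct shift (recording a neighbor of $v_1$ at index $i$ when $v_1 \sim v_{i+1}$, but a neighbor of $v_k$ at index $i$ when $v_k \sim v_i$) so that a common index genuinely yields a cycle rather than a mere pair of chords. Once the counting bound $|A|+|B| > k-1$ is set up correctly, the rest is routine, and the degree hypothesis $\delta(G)\ge \frac{n}{2}$ is used in exactly two places — once to force $A\cap B\neq\emptyset$, and once (through connectivity) to force $C$ to be spanning.
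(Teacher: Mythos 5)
Your proof is correct. Note that the paper does not actually prove this statement---it is cited as Dirac's classical theorem---but it does remark, just after Observation 2.1, that part $(1)$ of that observation ``yields a standard proof of Dirac's and Ore's theorems,'' and your argument is exactly that standard crossing-chord proof: your common index $i\in A\cap B$ with $v_1\sim v_{i+1}$ and $v_k\sim v_i$ is precisely the configuration of Observation 2.1~$(1)$ (shifted by one). The only difference in packaging is the setup: the route the paper alludes to (and uses in its proof of Theorem 1.7) starts from a Hamilton path, obtained via an edge-maximal counterexample, so the pigeonhole step immediately closes a Hamilton cycle; you instead start from a \emph{longest} path, which makes the argument self-contained but obliges you to add the third step---using $\delta(G)\ge n/2$ to get connectivity and then extending the cycle $C$ by cutting at a vertex $u$ with an outside neighbor $w$, contradicting maximality. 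That extension step is handled correctly. Two small points you gloss over: connectivity deserves a line ($u\nsim v$ forces a common neighbor since $d(u)+d(v)\ge n>n-2$), and the ``genuine cycle'' caveat is really guaranteed by $k\ge\delta+1\ge 3$ (a longest path has at least $\delta+1$ vertices) rather than by $n\ge 3$ directly; both are routine.
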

A lot of effort have been made by various people in generalization of Dirac’s theorem and this area is one of the core subjects in hamiltonian graph theory. For more information on some of these generalizations, we refer the reader to  \cite{Dirac1952,Fan1984,Faudree1989,Gould2014,Li2013,Ore1960}. The first important generalization was obtained by Ore in 1960.
\begin{theorem}[Ore \cite{Ore1960}]
Let $G$ be a graph of order at least three. If $\sigma_2(G)\geq n$, then $G$ is hamiltonian.
\end{theorem}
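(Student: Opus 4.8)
The plan is to argue by contradiction, exploiting the standard device of passing to an edge-maximal counterexample. Suppose $G$ has order $n \ge 3$ and $\sigma_2(G) \ge n$, yet $G$ is not hamiltonian. First I would repeatedly add edges joining nonadjacent vertices, keeping the graph non-hamiltonian at each step, until reaching a spanning supergraph $G^{*}$ on $V(G)$ that is edge-maximal with respect to non-hamiltonicity (so that $G^{*}+e$ is hamiltonian for every nonedge $e$). Adding edges never decreases a degree, so $\sigma_2(G^{*}) \ge \sigma_2(G) \ge n$. Since $K_n$ is hamiltonian for $n \ge 3$, the graph $G^{*}$ is not complete, and I may fix two nonadjacent vertices $u$ and $v$.

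By maximality $G^{*}+uv$ is hamiltonian, and every Hamilton cycle of $G^{*}+uv$ must traverse the edge $uv$ (otherwise it would already be a Hamilton cycle of $G^{*}$). Removing $uv$ from such a cycle leaves a Hamilton path of $G^{*}$ with endpoints $u$ and $v$; write it as $u = x_1\, x_2 \cdots x_n = v$.

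The crux is a rotation/crossing argument along this path. I would observe that no index $i$ with $1 \le i \le n-1$ can satisfy both $u x_{i+1} \in E(G^{*})$ and $x_i v \in E(G^{*})$: if it did, then $x_1 x_2 \cdots x_i\, x_n x_{n-1} \cdots x_{i+1}\, x_1$ would be a Hamilton cycle of $G^{*}$, contradicting its non-hamiltonicity. Hence, setting $S = \{\, i : u x_{i+1} \in E(G^{*}) \,\}$ and $T = \{\, i : x_i v \in E(G^{*}) \,\}$, both regarded as subsets of $\{1,\dots,n-1\}$, we have $S \cap T = \varnothing$. Since every neighbour of $u$ lies among $x_2,\dots,x_n$ and every neighbour of $v$ lies among $x_1,\dots,x_{n-1}$, it follows that $|S| = d_{G^{*}}(u)$ and $|T| = d_{G^{*}}(v)$, so $d_{G^{*}}(u) + d_{G^{*}}(v) = |S| + |T| \le n-1$.

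This contradicts $d_{G^{*}}(u) + d_{G^{*}}(v) \ge \sigma_2(G^{*}) \ge n$, completing the argument. The only delicate point, and the place where the hypothesis is used with no slack, is the counting in the last two sentences: one must verify that the two neighbour-index sets are genuinely disjoint and are both confined to the $(n-1)$-element index set, which is exactly what forces the degree sum below $n$ whenever the Hamilton path refuses to close into a cycle.
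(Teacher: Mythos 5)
Your proof is correct and is precisely the standard argument the paper alludes to: Ore's theorem is only cited there, but the authors note that Observation~2.1~(1) (the crossing-edge configuration $v_i \sim v_1$, $v_{i-1} \sim v_n$) yields the standard proof, and they use the same edge-maximal counterexample device in their own Theorem~1.7. Your disjoint index-set counting $|S|+|T| \le n-1$ is exactly that argument, carried out correctly.
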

Dirac \cite{Dirac1952} and Ore \cite{Ore1960} laid the groundwork for hamiltonian graph theory. Results based on minimum degree are called Dirac-type, while those involving $\sigma_2(G)$ are known as Ore-type. The following notion of bipartite hole was introduced by 
McDiarmid and Yolov \cite{Mcdiarmid2017} in the study of Hamilton cycles.
\begin{definition}
An $(s,t)$-bipartite-hole in a graph $G$ consists of two disjoint sets of vertices, $S$ and $T$, with $|S| = s$ and $|T| = t$, such that $[S, T] = \emptyset$. The {\it bipartite-hole-number} of a graph $G$, denoted as $\widetilde{\alpha}(G)$, is the minimum number $k$ such that there exist positive integers $s$ and $t$ with $s + t = k + 1$, and such that $G$ does not contain an $(s,t)$-bipartite-hole.
\end{definition}

An equivalent definition of $\widetilde{\alpha}(G)$ is the maximum integer $r$ such that $G$ contains an $(s,t)$-bipartite-hole for every pair of nonnegative integers $s$ and $t$ with $s+t=r$.

In 2017, McDiarmid and Yolov \cite{Mcdiarmid2017} provided a sufficient condition for hamiltonicity  in terms of the minimum degree and the  bipartite-hole-number.
\begin{theorem}[McDiarmid-Yolov \cite{Mcdiarmid2017}]\label{dirac-hamiltonian}
Let $G$ be a graph of order at least three.  If $\delta(G)\ge \widetilde{\alpha}(G)$, then $G$ is hamiltonian.
\end{theorem}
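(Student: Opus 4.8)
The plan is to argue by contraposition: assuming $G$ is not hamiltonian, I will show $\widetilde{\alpha}(G)\ge \delta(G)+1$, contradicting the hypothesis. By the equivalent formulation of $\widetilde{\alpha}$ recorded just after the definition, proving $\widetilde{\alpha}(G)\ge \delta(G)+1$ amounts to exhibiting, for \emph{every} split $s+t=\delta(G)+1$ with $s,t\ge 1$, an $(s,t)$-bipartite-hole; this ``all splits'' requirement is exactly what makes the statement delicate. First I would dispose of the degenerate cases: if $G$ is complete it is trivially hamiltonian, and otherwise one checks $\widetilde{\alpha}(G)\ge 2$ (two non-adjacent vertices give a $(1,1)$-hole), so the hypothesis forces $\delta:=\delta(G)\ge 2$. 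Hence $G$ contains a cycle, and I fix a longest cycle $C$; its length is at least $\delta+1$ by the circumference bound, and since $G$ is not hamiltonian $C$ is non-spanning.

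Next I would build the holes from the exterior of $C$. Let $H$ be a component of $G-V(C)$, write $h=|V(H)|$, $m=|N_C(H)|$, and set $Y=V(C)\setminus N_C(H)$ with $q=|Y|=|C|-m$. Two structural facts drive the argument. (A) Purely by the definition of $N_C(H)$, no vertex of $Y$ has a neighbour in $H$, so $[V(H),Y]=\emptyset$: this is already a large bipartite hole of type $(h,q)$, and it needs no extremal property of $C$. (B) Let $T$ be the set of successors along $C$ of the vertices of $N_C(H)$. A rotation/re-routing argument—inserting a path through $H$ and reversing an arc of $C$—shows that $T$ is independent and that each successor has no neighbour in $H$, whence $T\subseteq Y$, $|T|=m$ and $[V(H),T]=\emptyset$. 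I would also record the inequalities $h+m\ge \delta+1$ (any $v\in V(H)$ satisfies $d(v)\le (h-1)+m$) and $q\ge \delta+1-m$ (from $|C|\ge\delta+1$).

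With these in hand, the final step is a short case analysis realizing an arbitrary split; by symmetry assume $s\le t$, so $s\le \tfrac{\delta+1}{2}\le t$. If $s\le m$, I use the independent set $T$ as a pivot: choose $S\subseteq T$ with $|S|=s$ and $T'\subseteq V(H)\cup(T\setminus S)$ with $|T'|=t$, which is possible because $h+m\ge\delta+1$; independence of $T$ together with $[V(H),Y]=\emptyset$ gives $[S,T']=\emptyset$. If instead $s>m$, then $m<s\le\tfrac{\delta+1}{2}$ forces both $h\ge\delta+1-m\ge t+1$ and $q\ge\delta+1-m\ge t+1$, so the single large hole $(V(H),Y)$ from (A) already contains an $(s,t)$-subhole. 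Either way the required hole exists, completing the contradiction.

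I expect the main obstacle to be precisely the splits that the naive construction misses. The independent successor set $T$ alone only yields holes with $\min(s,t)\le m$, so the balanced splits, where both parts exceed $m$, look problematic; the key realization is that one should use the \emph{entire} non-attachment arc $Y$ rather than just its successors, after which the minimum-degree bound forces both sides of $(V(H),Y)$ to be large enough. The genuinely technical part is Fact (B): verifying, via the maximality of $C$ and the connectivity of $H$, that the successors are pairwise non-adjacent and have no neighbour in $H$, so that each re-routing exchange produces a strictly longer cycle. Handling the boundary situations—a component $H$ joined to $C$ by few or no edges ($m$ small, or $m=0$ for a separate component), together with the small-$\delta$ cases—will require a little extra care but no new ideas.
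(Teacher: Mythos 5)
Your proposal is correct, but note that the paper contains no proof of this statement at all: Theorem~\ref{dirac-hamiltonian} is quoted from McDiarmid--Yolov \cite{Mcdiarmid2017} as background, so the only meaningful comparison is with the machinery the paper deploys for its own Ore-type results (Theorems~\ref{Theorem-ore-hamiltonian} and \ref{Theorem-ore-hamiltonian-connected}). That machinery runs in the opposite logical direction from yours: the authors take an edge-maximal counterexample, extract a Hamilton path, fix the \emph{single} split $s+t=\widetilde{\alpha}(G)+1$ for which $G$ has no $(s,t)$-bipartite-hole, and use that non-existence to force edges among shifted neighbour sets ($S^+$, $T^+$, $S^-$, etc.), which path rotations convert into a Hamilton cycle. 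You instead prove the contrapositive directly, manufacturing a hole for \emph{every} split of $\delta(G)+1$ from a longest non-spanning cycle $C$ and a component $H$ of $G-V(C)$; I checked the arithmetic and it closes: in your case $s\le m$ the choice of $T'$ needs $h+m-s\ge t$, i.e.\ $h+m\ge\delta+1$, which your degree count for a vertex of $H$ supplies; in the case $m<s\le t$ the bounds $h\ge\delta+1-m\ge t+1$ and $q\ge\delta+1-m\ge t+1$ indeed cover both sides of the $(V(H),Y)$ hole, and the boundary situation $m=0$ (a component unattached to $C$, when $G$ is disconnected) falls under this case automatically since $s\ge 1>m$. Your Fact (B) is the standard rotation argument (a path through $H$ between two attachment vertices plus a reversed arc of $C$ strictly lengthens $C$), and the reduction of ``$\widetilde{\alpha}(G)\ge\delta+1$'' to exhibiting holes only for splits with $s+t$ exactly $\delta+1$ is justified by monotonicity of holes under shrinking. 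The trade-off between the two routes is worth noting: your construction uses the minimum-degree hypothesis at \emph{every} vertex of $H$ (that is what gives $h+m\ge\delta+1$), so it is tailored to Dirac-type hypotheses and would not adapt to the paper's $\sigma_2$ setting, where only nonadjacent pairs have controlled degree sum --- which is precisely why the paper's Ore-type theorems need the edge-maximal-counterexample/forced-edge technique rather than your hole-by-hole contrapositive.
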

A graph is called  hamiltonian-connected if between any 
two distinct vertices there is a Hamilton path. 
The following well-known theorem, established by Ore, provides the corresponding degree sum conditions for any graph to be hamiltonian-connected.
\begin{theorem}[Ore \cite{Ore1963}]
Let $G$ be a graph of order at least three.  If $\sigma_2(G)\geq n+1$, then $G$ is hamiltonian-connected.
\end{theorem}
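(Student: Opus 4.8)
The plan is to reduce the statement to a cleaner claim about a single prescribed edge and then run an Ore-style ``crossing'' count. Concretely, to produce a Hamilton $(u,v)$-path for a fixed pair of distinct vertices $u,v$, I would pass to $H = G + uv$ (adding the edge if it is missing; note $H$ has the same order $n$ and $\sigma_2(H) \ge \sigma_2(G) \ge n+1$, since inserting an edge only raises degrees and removes one pair from the set of nonadjacent pairs). It then suffices to prove the following lemma: if $\sigma_2(H) \ge n+1$ with $n = |H| \ge 3$, then every edge of $H$ lies on a Hamilton cycle. Indeed, applying this to the edge $uv$ yields a Hamilton cycle $C$ through $uv$, and $C - uv$ is a Hamilton $(u,v)$-path that uses only edges of $G$ (the sole edge of $H$ outside $G$ being $uv$ itself), so $G$ is hamiltonian-connected.

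To prove the lemma, fix an edge $ab$ and argue by contradiction and edge-maximality. Among all supergraphs of $H$ on the same vertex set having no Hamilton cycle through $ab$, choose an edge-maximal one $H'$; it still satisfies $\sigma_2(H') \ge n+1$, and it is not complete (a complete graph on $n \ge 3$ vertices has a Hamilton cycle through every edge). Pick nonadjacent $x,y$ in $H'$. By maximality $H' + xy$ has a Hamilton cycle through $ab$, and this cycle must use $xy$; deleting $xy$ leaves a Hamilton $(x,y)$-path $P : x = u_1, u_2, \dots, u_n = y$ of $H'$ that traverses the edge $ab$, say $ab = u_k u_{k+1}$.

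Now comes the crossing count, which is the heart of the argument. For an index $i$ with $u_1 \sim u_{i+1}$ and $u_n \sim u_i$, the standard rerouting produces the Hamilton cycle $u_1 u_{i+1} u_{i+2} \cdots u_n u_i u_{i-1} \cdots u_1$, which retains every edge of $P$ except $u_i u_{i+1}$. As long as $i \ne k$ this cycle still contains $ab$, contradicting the choice of $H'$. Hence, setting $A = \{\, i : u_1 \sim u_{i+1}\,\}$ and $B = \{\, i : u_n \sim u_i\,\}$, we get $A \cap B \subseteq \{k\}$, so $d(x) + d(y) = |A| + |B| = |A \cup B| + |A \cap B| \le (n-1) + 1 = n$. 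Since $x \not\sim y$, this contradicts $\sigma_2(H') \ge n+1$, proving the lemma.

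The main obstacle, and the reason a naive reduction fails, is the bookkeeping forced by the prescribed edge $ab$. The tempting shortcut of adjoining a new vertex adjacent to $u$ and $v$ and invoking Ore's cycle theorem does not work, because such a vertex necessarily has degree $2$ and destroys the $\sigma_2 \ge |G|+1$ hypothesis in the enlarged graph. The correct device is to keep the edge $ab$ \emph{protected}: it forbids exactly one crossing index (namely $i = k$), which is precisely what upgrades the bound $d(x)+d(y) \le n-1$ governing ordinary hamiltonicity to $d(x)+d(y)\le n$ here, so that the extra ``$+1$'' in the hypothesis $\sigma_2 \ge n+1$ is exactly what is consumed to reach a contradiction. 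Verifying that $\sigma_2$ survives the passage to $H$ and to the edge-maximal $H'$, and that the rerouted cycle indeed keeps $ab$ whenever $i\ne k$, are the routine but essential checks.
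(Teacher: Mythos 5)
Your proof is correct. Note, however, that the paper does not prove this statement at all: it is quoted as a known theorem of Ore (1963), so there is no in-paper argument to compare against. Your argument is the classical one, and it is sound at every step I checked: the passage to $H=G+uv$ preserves $\sigma_2\ge n+1$ (degrees only grow and one nonadjacent pair disappears); the edge-maximal counterexample $H'$ exists, is non-complete, and still satisfies the degree-sum bound; the forced Hamilton cycle of $H'+xy$ through $ab$ must use $xy$, yielding a Hamilton $(x,y)$-path of $H'$ containing $ab=u_ku_{k+1}$; and the rotation at an index $i\in A\cap B$ destroys only the path edge $u_iu_{i+1}$, so the protected edge $ab$ forbids exactly the single index $i=k$, giving $d(x)+d(y)=|A\cup B|+|A\cap B|\le (n-1)+1=n$, contradicting $\sigma_2(H')\ge n+1$. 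Your remark about why the naive trick of adjoining a degree-$2$ vertex fails is also accurate. It is worth observing that your scheme --- take a maximal counterexample, extract a Hamilton path forced to contain a distinguished added edge, then do a crossing count in which that edge excludes one rotation --- is structurally the same template the authors use for their own Theorems \ref{Theorem-ore-hamiltonian} and \ref{Theorem-ore-hamiltonian-connected} (there the distinguished edge is $e=v_rv_{r+1}$ and the counting is against bipartite holes rather than against $n$), so your proof fits naturally alongside the paper's methods.
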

In 2024, Zhou, Broersma, Wang and Lu provided a sufficient condition for hamiltonian connectedness based on the minimum degree and the bipartite-hole-number.
\begin{theorem}[Zhou-Broersma-Wang-Lu \cite{Zhou2024}]\label{dirac-hamiltonian-connected}

Let $G$ be a graph of order at least three.  If $\delta(G)\ge \widetilde{\alpha}(G)+1$, then $G$ is hamiltonian-connected.
\end{theorem}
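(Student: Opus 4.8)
The plan is to argue by contradiction and to reduce the entire problem to producing a single sufficiently large independent set. The key fact I would record first is that the bipartite-hole-number dominates the independence number: any independent set of size $m$ yields an $(s,t)$-bipartite-hole for every split $s+t=m$ (partition the set arbitrarily), so $\widetilde{\alpha}(G)\ge\alpha(G)$. Consequently, to contradict the hypothesis $\delta(G)\ge\widetilde{\alpha}(G)+1$ it suffices to exhibit an independent set of size at least $\delta(G)$, since that would force $\widetilde{\alpha}(G)\ge\alpha(G)\ge\delta(G)\ge\widetilde{\alpha}(G)+1$. After disposing of trivial preliminaries (the order is at least three, $\widetilde{\alpha}(G)\ge1$ so $\delta(G)\ge2$, and $G$ is connected, since a disconnected graph has two components each of order at least $\delta(G)+1$, giving a bipartite hole with both sides of size $\delta(G)$ and hence $\widetilde{\alpha}(G)\ge\delta(G)$), I would assume $G$ is not hamiltonian-connected, fix vertices $u,v$ admitting no Hamilton $(u,v)$-path, choose a longest $(u,v)$-path $P=x_0x_1\cdots x_\ell$ with $x_0=u$ and $x_\ell=v$, and set $R=V(G)\setminus V(P)$, which is nonempty.

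The heart of the argument is an insertion/rotation analysis producing the independent set. I would pick a component $D$ of $G[R]$ and a longest path $Q=w_0w_1\cdots w_q$ in $D$, and write $w=w_0$. The first claim is that for every $P$-neighbour $x_i$ of $w$ the successor $x_{i+1}$ is nonadjacent to every vertex of $Q$: if some $w_k\sim x_{i+1}$, then $x_0\cdots x_i\,w_0w_1\cdots w_k\,x_{i+1}\cdots x_\ell$ is a strictly longer $(u,v)$-path, a contradiction. In particular the successor set $A=\{x_{i+1}:x_i\in N_P(w)\}$ satisfies $[A,V(Q)]=\emptyset$, and an analogous rerouting (using $w$ alone, via $x_0\cdots x_i\,w\,x_j\cdots x_{i+1}\,x_{j+1}\cdots x_\ell$) shows $A$ is independent. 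Inside $D$, applying Pósa rotation to the longest path $Q$ shows that the $Q$-successors $A'=\{w_{k+1}:w_k\in N_Q(w)\}$ form an independent set nonadjacent to $w$. I would then take $I=\{w\}\cup A\cup A'$: edges inside $A$ and inside $A'$ are excluded by the two independence statements, while every edge between $A$ and $A'\cup\{w\}$ is excluded because $A'\cup\{w\}\subseteq V(Q)$ and $[A,V(Q)]=\emptyset$.

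It remains to count, and this is where I expect the real difficulty. Since $Q$ is a longest path in $D$ with endpoint $w$, we have $N_D(w)\subseteq V(Q)$ and so $d_D(w)=|N_Q(w)|$; ignoring boundary effects, $|I|\approx|A|+|A'|+1=d_P(w)+d_D(w)+1=d(w)+1\ge\delta(G)+1$. The losses come precisely from the endpoints: the neighbour $x_\ell=v$ of $w$ (if present) contributes no successor to $A$, the neighbour $w_q$ of $w$ (if present) contributes no successor to $A'$, and one must also rule out or absorb stray adjacencies between $w$ and $A'$. The hard part will be the bookkeeping that guarantees these boundary losses still leave $|I|\ge\delta(G)$ rather than $\delta(G)-1$; this is exactly the point at which the two \emph{fixed} endpoints $u,v$ of $P$ cost one unit compared with the cyclic setting of Theorem~\ref{dirac-hamiltonian}, and it is what forces the hypothesis $\delta(G)\ge\widetilde{\alpha}(G)+1$ rather than $\delta(G)\ge\widetilde{\alpha}(G)$. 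I would handle it by a short case analysis on which of $v$ and $w_q$ are neighbours of $w$, in the worst case re-basing the argument at the other endpoint $w_q$ of $Q$ or enlarging $I$ by a carefully chosen predecessor vertex, so as to recover the missing unit. Once $|I|\ge\delta(G)$ is secured, the independent set $I$ gives $\widetilde{\alpha}(G)\ge\alpha(G)\ge\delta(G)$, contradicting $\delta(G)\ge\widetilde{\alpha}(G)+1$ and completing the proof.
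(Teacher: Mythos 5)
There is a genuine gap, and it is not the endpoint bookkeeping you flag at the end but the step you treat as routine. Your appeal to P\'osa rotation to conclude that $A'=\{w_{k+1}:w_k\in N_Q(w)\}$ is independent and nonadjacent to $w$ is invalid: rotating the longest path $Q$ in $D$ along an edge $ww_k$ produces another path of the \emph{same} length with new endpoint $w_{k-1}$, so an edge $w_{k+1}w_{j+1}$ or $ww_{k+1}$ contradicts nothing --- it merely re-routes $Q$ within $V(Q)$. Concretely, let $G$ be two copies of $K_m$ glued along two vertices $x,y$. Then $G$ has no Hamilton $(x,y)$-path, a longest $(x,y)$-path $P$ covers one clique, $D$ is the clique $K_{m-2}$ on the other interior, and your $A'$ is a clique of size $m-3$ with $w$ adjacent to all of it. (Your claims that $A$ is independent and that $[A,V(Q)]=\emptyset$ are correct, because those reroutings genuinely lengthen $P$ by absorbing $w$ or a segment of $Q$; but $I=\{w\}\cup A\cup A'$ is not independent.)

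The same example shows the strategy cannot be repaired: that graph is not hamiltonian-connected, yet $\alpha(G)=2$ while $\delta(G)=m-1$, so no argument can extract an independent set of size $\delta(G)$ from the failure of hamiltonian-connectivity. Your inequality $\widetilde{\alpha}(G)\ge\alpha(G)$ is true, but it makes your target strictly stronger than the theorem's contrapositive $\widetilde{\alpha}(G)\ge\delta(G)$, and that stronger statement is false in general; this is exactly why the bipartite-hole-number, rather than the independence number, appears in Theorems~\ref{dirac-hamiltonian} and~\ref{dirac-hamiltonian-connected}. Note that the present paper only cites this theorem from \cite{Zhou2024}; the working proofs there and in \cite{Mcdiarmid2017} (and, in the same spirit, Case 1 of the arguments in Sections~\ref{Proof-hamiltonian-traceable} and~\ref{Proof-hamiltonian-connected} of this paper) exploit precisely the kind of object you already built: a pair of sets with no edges \emph{between} them but with edges freely allowed inside each, such as your $A$ versus $V(Q)$, whose sizes sum to at least $d(w)\ge\widetilde{\alpha}(G)+1$. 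One fixes the split $s+t=\widetilde{\alpha}(G)+1$ for which $G$ has no $(s,t)$-bipartite-hole, trims successor/predecessor sets along $P$ and $Q$ to sizes exactly $s$ and $t$ with no edges between them, and converts the edge guaranteed by the no-hole property into a longer $(u,v)$-path; independence of either set is never needed --- and, as your own construction shows, never available.
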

There has been much recent work on the bipartite-hole-number. For more references, the reader may refer to \cite{Chen2022,Han2024,Draganic2024}.  Our first result is a sufficient condition for hamiltonicity in terms of $ \sigma_2(G) $ and the bipartite-hole-number.


\begin{theorem}\label{Theorem-ore-hamiltonian}
Let $G$  be a $2$-connected graph of order at least three. If $\sigma_2(G) \geq 2\widetilde{\alpha}(G)$, then $G$ is hamiltonian.
\end{theorem}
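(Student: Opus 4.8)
The plan is to argue by contradiction using a longest cycle, since $2$-connectivity enters most naturally through the classical longest-cycle surgery. Write $\widetilde{\alpha}=\widetilde{\alpha}(G)$ and suppose $G$ is not hamiltonian. First I would record two elementary facts about the bipartite-hole-number that convert structural information into a bound on $\widetilde{\alpha}$: (i) it is monotone under taking induced subgraphs, $\widetilde{\alpha}(G)\ge\widetilde{\alpha}(G[X])$ for every $X\subseteq V(G)$, since a bipartite hole of $G[X]$ is one of $G$; and (ii) if $I$ is an independent set with $I\cap Y=\emptyset$ and no edge to a vertex set $Y$, then $\widetilde{\alpha}(G[I\cup Y])\ge |I|+\widetilde{\alpha}(G[Y])$, because the $|I|$ pairwise non-adjacent vertices, being also non-adjacent to all of $Y$, can be freely dumped onto either side of any bipartite hole of $G[Y]$. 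In particular an independent set of size $r$ forces $\widetilde{\alpha}(G)\ge r$.

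Next, take a longest cycle $C$, fix an orientation, and let $H$ be a component of $G-V(C)$, which is nonempty as $G$ is not hamiltonian. Let $A=N_C(H)$ be its attachment set; $2$-connectivity gives $|A|\ge 2$. Writing $A^{+}$ for the set of successors on $C$ of the vertices of $A$, the standard rerouting arguments show that $A^{+}$ is independent and that there is no edge between $A^{+}$ and $V(H)$: an edge inside $A^{+}$, or from $A^{+}$ to $H$, can be combined with a path through $H$ to produce a cycle strictly longer than $C$. Thus $A^{+}$ is an independent set of size $|A|$ with no edge to $V(H)$, so by (i) and (ii),
\[
\widetilde{\alpha}\;\ge\;\widetilde{\alpha}\big(G[A^{+}\cup V(H)]\big)\;\ge\;|A|+\widetilde{\alpha}(G[H]),
\]
whence $\widetilde{\alpha}(G[H])\le\widetilde{\alpha}-|A|$. (If some off-cycle vertex $v$ has at least $\widetilde{\alpha}$ neighbours on $C$, then $\{v\}\cup N_C(v)^{+}$ is already an independent set of size $\widetilde{\alpha}+1$, an immediate contradiction; so I may assume every off-cycle vertex has few cycle-neighbours.)

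The heart of the argument is a degree transfer that reinstates the hypothesis on the smaller graph $H$. Each $v\in V(H)$ sends all its cycle-edges into $A$ and has no neighbour in any other component, so $d_C(v)\le|A|$ and $d_{G[H]}(v)\ge d_G(v)-|A|$. For two non-adjacent $v,v'\in V(H)$ the Ore hypothesis gives $d_G(v)+d_G(v')\ge 2\widetilde{\alpha}$, hence
\[
\sigma_2(G[H])\;\ge\;2\widetilde{\alpha}-2|A|\;=\;2\big(\widetilde{\alpha}-|A|\big)\;\ge\;2\,\widetilde{\alpha}(G[H]).
\]
So $G[H]$ satisfies the very same inequality as $G$, which is exactly the set-up for an induction on $|V(G)|$: the goal is to deduce that $H$ is hamiltonian (or, better, traceable with a prescribed pair of endpoints) and then to splice a Hamilton path of $H$ into $C$ between two attachments, producing a cycle longer than $C$ and contradicting maximality.

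I expect this last step to be the main obstacle, for two reasons. First, $H$ need not be $2$-connected, so the induction cannot be applied verbatim; this must be handled by choosing the component and the reduction carefully, by passing to an endpoint-flexible strengthening of the statement, or by dispatching the low-connectivity and base cases (for instance $H$ a single vertex or a clique) separately. Second, knowing only that $H$ is hamiltonian yields Hamilton paths of $H$ between cyclically consecutive vertices of a Hamilton cycle of $H$, whereas the rerouting requires the two endpoints of the inserted path to be adjacent to two \emph{distinct} attachments of $A$; bridging this gap is precisely where the extra room provided by the factor $2$ in $\sigma_2\ge 2\widetilde{\alpha}$ (rather than a bare minimum-degree condition) must be spent, and making this matching between the endpoints in $H$ and the attachments on $C$ rigorous is the crux of the proof.
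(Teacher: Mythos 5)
Your preparatory work is sound: the monotonicity facts about $\widetilde{\alpha}$, the independence of $A^{+}$ together with $[A^{+},V(H)]=\emptyset$ for a longest cycle, and the resulting transfer $\sigma_2(G[H])\ge 2\widetilde{\alpha}(G[H])$ are all correct and cleanly argued. But the proposal stops exactly where the proof would have to happen, and the gap you flag yourself is fatal rather than technical. First, the inductive hypothesis is genuinely unavailable: $2$-connectivity is not a convenience in this theorem but essential --- the paper's own example (the disjoint union $K_a\cup K_b$ plus a single edge, $b\ge 3a+4$) satisfies $\sigma_2\ge 2\widetilde{\alpha}$ and is not hamiltonian, and nothing prevents $G[H]$ from being exactly such a graph, so ``$H$ is hamiltonian (or traceable)'' simply does not follow from your transferred inequality. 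Second, even granting a Hamilton path of $H$ whose endpoints attach to distinct vertices $a,b\in A$, the splice you describe does not produce a longer cycle: the cycle formed by $a$, the path through $H$, $b$, and one arc of $C$ back to $a$ necessarily discards the interior of the other arc between $a$ and $b$, and since $|V(H)|$ may be far smaller than that arc (e.g.\ $H$ a single vertex), no contradiction with the maximality of $C$ results. The classical length-gaining splice needs the inserted path to attach at $a$ and $a^{+}$, which is precisely what the independence of $A^{+}\cup V(H)$-adjacency forbids. So the longest-cycle/induction framework, as set up, has no visible way to close.

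The paper avoids all of this by a different mechanism: it takes an edge-maximal counterexample, so that for nonadjacent $u,v$ the graph $G_{uv}=G+uv$ is hamiltonian and hence $G$ itself has a Hamilton $(u,v)$-path; it then chooses such a path maximizing $\min\{d_G(u),d_G(v)\}$ and runs Ore-type rotation arguments along the path, applying the no-$(s,t)$-bipartite-hole property to carefully chosen successor and predecessor sets of the endpoints' neighborhoods (the crossing and noncrossing configurations of Observation 2.1) to bound $d_G(v_1)$, with a separate clique-growing argument invoking $2$-connectivity in the case $d_G(v_1)<\widetilde{\alpha}(G)$. If you wish to salvage your route, you would need at minimum an endpoint-prescribed strengthening of the statement proved simultaneously by induction, together with a splicing step that actually gains length; as written, your argument is a plausible plan whose decisive step is missing, not a proof.
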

The condition that the graph is $2$‑connected in Theorem \ref{Theorem-ore-hamiltonian} is necessary. To see this, let $ G $ be the graph obtained by taking the disjoint union of $ K_a $ and $ K_b $ and then adding a single edge, where $ b \ge 3a + 4 $. Clearly, 
$\sigma_2(G) \ge 4a + 2 = 2\widetilde{\alpha}(G),$
yet $ G $ is not hamiltonian.

As an application of Theorem \ref{Theorem-ore-hamiltonian}, we have the following corollary.
\begin{corollary}\label{Corollary-ore-traceable}
Let $G$  be a connected graph. If $\sigma_2(G) \ge 2\widetilde{\alpha}(G)-2$, then $G$ is traceable. 
\end{corollary}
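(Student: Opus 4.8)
The plan is to reduce traceability of $G$ to hamiltonicity of an auxiliary graph and then invoke Theorem \ref{Theorem-ore-hamiltonian}. The case $|G|=1$ is trivial, so assume $n:=|G|\ge 2$. Form $G':=G\vee K_1$, the join of $G$ with a single new vertex $v$; that is, $V(G')=V(G)\cup\{v\}$ with $v$ adjacent to every vertex of $G$. The reduction rests on a standard observation: a Hamilton cycle of $G'$ must use $v$ together with two of its neighbours $a,b\in V(G)$, and deleting $v$ from such a cycle leaves a Hamilton path of $G$ with endpoints $a$ and $b$. Hence it suffices to prove that $G'$ is hamiltonian, and I will do this by verifying the hypotheses of Theorem \ref{Theorem-ore-hamiltonian} for $G'$.

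Two of the three hypotheses are routine. First, $G'$ is $2$-connected: it has order $n+1\ge 3$, deleting $v$ leaves the connected graph $G$, and deleting any $u\in V(G)$ leaves $(G-u)\vee K_1$, which is connected because $v$ remains adjacent to all surviving vertices; thus $G'$ has no cut vertex. Second, I bound $\sigma_2(G')$ from below. Since $v$ is adjacent to every other vertex, each nonadjacent pair of $G'$ lies in $V(G)$ and is nonadjacent there as well, and $d_{G'}(u)=d_G(u)+1$ for every $u\in V(G)$; consequently $\sigma_2(G')\ge \sigma_2(G)+2$ (with the convention that this is $\infty$ when $G$ is complete).

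The crux is the identity $\widetilde{\alpha}(G')=\widetilde{\alpha}(G)$. The key point is that the universal vertex $v$ cannot lie on either side of a bipartite hole with both sides nonempty: if $v\in S$ and $t\ge 1$, then for any $w\in T$ we have $vw\in E(G')$, contradicting $[S,T]=\emptyset$, and symmetrically for $v\in T$. Therefore, for all integers $s,t\ge 1$, the graph $G'$ contains an $(s,t)$-bipartite-hole if and only if $G$ does: the only edges added in passing from $G$ to $G'$ are incident with $v$, so any such hole $(S,T)\subseteq V(G)$ is unaffected, and conversely any $(s,t)$-bipartite-hole of $G'$ with $s,t\ge 1$ avoids $v$ and hence lies in $G$. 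Since the definition of $\widetilde{\alpha}$ ranges only over positive $s,t$, the set of admissible values of $k$ coincides for $G$ and $G'$, giving $\widetilde{\alpha}(G')=\widetilde{\alpha}(G)$.

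Combining these facts yields
\[
\sigma_2(G')\ \ge\ \sigma_2(G)+2\ \ge\ \bigl(2\widetilde{\alpha}(G)-2\bigr)+2\ =\ 2\widetilde{\alpha}(G)\ =\ 2\widetilde{\alpha}(G').
\]
Thus $G'$ is a $2$-connected graph of order at least three with $\sigma_2(G')\ge 2\widetilde{\alpha}(G')$, so Theorem \ref{Theorem-ore-hamiltonian} shows that $G'$ is hamiltonian, and the reduction in the first paragraph produces a Hamilton path of $G$. The only step demanding genuine care is the bipartite-hole-number identity $\widetilde{\alpha}(G')=\widetilde{\alpha}(G)$; the connectivity check and the degree-sum bookkeeping are essentially immediate once the join construction is in place.
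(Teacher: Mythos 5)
Your proof is correct and takes essentially the same route as the paper: form $G\vee K_1$, observe $\widetilde{\alpha}(G\vee K_1)=\widetilde{\alpha}(G)$ and $\sigma_2(G\vee K_1)\ge\sigma_2(G)+2$, and apply Theorem~\ref{Theorem-ore-hamiltonian}. The only difference is cosmetic: you verify the $2$-connectivity, the bipartite-hole-number identity, and the cycle-to-path reduction by hand, whereas the paper delegates these to Lemma~\ref{Lemma-traceable-hamiltonian} and the definition of $\widetilde{\alpha}$.
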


Our second result is a sufficient condition for hamiltonian connectedness in terms of $\sigma_2(G) $ and the bipartite-hole-number.

\begin{theorem}\label{Theorem-ore-hamiltonian-connected}
Let $G$  be a $3$-connected graph. If $\sigma_2(G)\geq 2\widetilde{\alpha}(G)+1$, then $G$ is hamiltonian-connected.
\end{theorem}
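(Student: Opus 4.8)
The plan is to prove Theorem \ref{Theorem-ore-hamiltonian-connected} by reducing hamiltonian-connectedness to the hamiltonicity result already established in Theorem \ref{Theorem-ore-hamiltonian}. Fix two distinct vertices $x,y\in V(G)$; I must produce a Hamilton $(x,y)$-path in $G$. The natural device is to pass to an auxiliary graph $G'$ obtained from $G$ by adding a new vertex $w$ adjacent to both $x$ and $y$ (and to nothing else). Then a Hamilton $(x,y)$-path in $G$ corresponds exactly to a Hamilton cycle in $G'$ through the edges $wx$ and $wy$; more precisely, a Hamilton cycle of $G'$ must use both edges at $w$ (since $w$ has degree $2$), hence deleting $w$ yields a Hamilton $(x,y)$-path of $G$. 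So it suffices to show $G'$ is hamiltonian, and for that I would like to apply Theorem \ref{Theorem-ore-hamiltonian} to $G'$.

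To invoke Theorem \ref{Theorem-ore-hamiltonian} I must verify that $G'$ is $2$-connected and that $\sigma_2(G')\ge 2\widetilde{\alpha}(G')$. Connectivity is straightforward: since $G$ is $3$-connected, removing any single vertex from $G'$ leaves a connected graph (deleting $w$ leaves $G$, deleting any $v\in V(G)$ leaves $G-v$ together with $w$, which is still attached through whichever of $x,y$ survives), so $G'$ is $2$-connected. The two quantitative points are the crux. First I would compare the bipartite-hole-numbers: adding the low-degree vertex $w$ can only help create bipartite holes, and one expects $\widetilde{\alpha}(G')\le \widetilde{\alpha}(G)+1$, with the $+1$ slack being exactly what the hypothesis $\sigma_2(G)\ge 2\widetilde{\alpha}(G)+1$ is designed to absorb. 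I would prove this by taking an $(s,t)$-bipartite-hole witnessing $\widetilde{\alpha}(G')$ in $G'$, observing that placing $w$ into one of the two sides can only enlarge a bipartite hole of $G$, and translating back. Second, I would bound $\sigma_2(G')$ from below in terms of $\sigma_2(G)$: every original vertex keeps its $G$-degree except $x$ and $y$, whose degrees each increase by one, while $w$ has degree $2$.

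The main obstacle, and the step requiring care, is the degree-sum verification, because the new vertex $w$ has degree only $2$, which is typically far below the threshold $2\widetilde{\alpha}(G')$, so pairs involving $w$ threaten to violate the $\sigma_2$ hypothesis. The key observation that rescues the argument is that $w$ is \emph{adjacent} to $x$ and $y$ and to no one else, so the only vertices nonadjacent to $w$ are those in $V(G)\setminus\{x,y\}$; I must therefore either show these pairs still satisfy the bound or, more cleanly, argue that such pairs are irrelevant to the hamiltonicity argument. The cleaner route is to not apply Theorem \ref{Theorem-ore-hamiltonian} verbatim but to revisit its proof (or its Ore-type engine) in $G'$, where any non-hamiltonicity would force a longest cycle avoiding $w$ and hence a pair of nonadjacent vertices lying in $V(G)\setminus\{x,y\}$, for which the genuine bound $d_{G'}(u)+d_{G'}(v)=d_G(u)+d_G(v)\ge \sigma_2(G)\ge 2\widetilde{\alpha}(G)+1\ge 2\widetilde{\alpha}(G')$ applies. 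Handling pairs that include $w$ within the rotation/longest-path machinery is where I expect the real work to lie: I would show that $w$, having both its neighbors $x,y$ on any longest cycle or path, never participates in the critical nonadjacent pair that drives the standard contradiction, so the degree deficiency at $w$ never gets used. Assembling these pieces then yields a Hamilton cycle of $G'$ through $w$, equivalently the desired Hamilton $(x,y)$-path in $G$, completing the proof.
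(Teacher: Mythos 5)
Your reduction to hamiltonicity via the auxiliary vertex $w$ (adjacent only to $x$ and $y$) is the classical device, but it does not go through here, and the two places where you yourself flag ``the real work'' are precisely where the argument fails. First, an arithmetic gap that already kills the easy pairs: your final chain $d_{G'}(u)+d_{G'}(v)\ge \sigma_2(G)\ge 2\widetilde{\alpha}(G)+1\ge 2\widetilde{\alpha}(G')$ requires $\widetilde{\alpha}(G')\le\widetilde{\alpha}(G)$, whereas the bound you propose to prove is only $\widetilde{\alpha}(G')\le\widetilde{\alpha}(G)+1$. Since every bipartite hole of $G$ survives in $G'$, we always have $\widetilde{\alpha}(G')\ge\widetilde{\alpha}(G)$, and because $w$ has degree $2$ it enlarges holes (any $(s-1,t)$-hole of $G$ whose $t$-side avoids $x,y$ becomes an $(s,t)$-hole of $G'$), so typically $\widetilde{\alpha}(G')=\widetilde{\alpha}(G)+1$ and then $2\widetilde{\alpha}(G')=2\widetilde{\alpha}(G)+2>\sigma_2(G)$: the $+1$ slack in the hypothesis is one short of what your reduction needs, even for pairs $u,v\in V(G)$. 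Moreover, your sketch for $\widetilde{\alpha}(G')\le\widetilde{\alpha}(G)+1$ is itself incomplete: deleting $w$ from one side of a hole of $G'$ shifts the split from $(s,t)$ to $(s-1,t)$ or $(s,t-1)$, and since $\widetilde{\alpha}$ is defined by an existential choice of split, knowing that $G$ has no $(s_0,t_0)$-hole does not exclude these shifted holes.

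Second, and more fundamentally, the claim that $w$ ``never participates in the critical nonadjacent pair'' when one reruns the engine of Theorem~\ref{Theorem-ore-hamiltonian} inside $G'$ is unsubstantiated, and it is false in spirit: in Case~2 of that proof the condition $\sigma_2\ge 2\widetilde{\alpha}$ is applied to pairs of \emph{low-degree} path endpoints produced by rotations, in order to conclude that the low-degree vertices form a clique. The vertex $w$ has degree $2<\widetilde{\alpha}(G')$, is adjacent only to $x$ and $y$, and nothing in the edge-maximality/rotation machinery lets you steer the endpoint pairs away from it; once $w$ lands among the low-degree vertices the clique conclusion collapses. Deferring this point defers the entire theorem --- this is exactly why the paper does not use the $w$-reduction at all, but instead argues directly in $G$: it takes an edge-maximal counterexample, considers a Hamilton $(u,v)$-path in $G_e$ through the one missing edge $e=v_rv_{r+1}$ chosen to maximize $\min\{d_G(v_r),d_G(v_{r+1})\}$, runs the bipartite-hole counting on the segments around $v_r,v_{r+1}$ when $d_G(v_r)\ge\widetilde{\alpha}(G)+1$, and in the remaining case partitions $V(G)$ into two cliques $X$ (low degree) and $Y$ (high degree) and uses $3$-connectivity to find three disjoint $(X,Y)$-paths. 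If you want to salvage a reduction-style proof, you would need a strengthened hamiltonicity statement (a Hamilton cycle through a prescribed degree-$2$ vertex under an Ore--bipartite-hole condition on the remaining pairs), which is essentially the theorem itself in disguise.
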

The $3$-connectivity condition in Theorem \ref{Theorem-ore-hamiltonian-connected} is necessary. Consider the graph $G = (K_{a-2}\cup K_1)\vee K_2$. Clearly, $\sigma_2(G) =a+1$ and $\widetilde{\alpha}(G)\leq 3$. Note that for $a\geq 6$, we have that $\sigma_2(G)\geq 7\geq 2\widetilde{\alpha}(G)+1.$ Since $\kappa(G)=2$, $G$ is not hamiltonian-connected.
 
We organize the remainder of this paper as follows:  Section \ref{Proof-hamiltonian-traceable} presents the proofs of Theorem \ref{Theorem-ore-hamiltonian} and Corollary \ref{Corollary-ore-traceable}, while Section \ref{Proof-hamiltonian-connected} focuses on the proof of Theorem \ref{Theorem-ore-hamiltonian-connected}.

\section{Proofs of Theorem \ref{Theorem-ore-hamiltonian} and Corollary \ref{Corollary-ore-traceable}}\label{Proof-hamiltonian-traceable}
The aim of this section is to prove Theorem \ref{Theorem-ore-hamiltonian} and Corollary \ref{Corollary-ore-traceable}. Before proceeding with the proof, we list  some notations and  observations that will be needed in later proofs. 
Let $P$ be an oriented $(u,v)$-path. For $x\in V(P)$ with $x\ne v$, denote by $x^+$ the immediate successor on $P$.  For $x\in V(P)$  with $x\ne u$, denote by $x^-$ the predecessor on $P$. 
For $S\subseteq V(P)$, let $S^+=\{x^+:x\in S\setminus\{v\}\}$ and $S^-=\{x^-:x\in S\setminus\{u\}\}$. Obviously, $|S^+|=|S|$ or $|S^+|=|S|-1$. 
For $x,y\in V(P)$,  $\overrightarrow{P}[x,y]$ denotes the segment of $P$ from $x$ to $y$ which follows the orientation of $P$, while $\overleftarrow{P}[x,y]$ denotes the opposite segment of $P$ from $y$ to $x$. Particularly, if $x=y$, then $\overrightarrow{P}[x,y]=\overleftarrow{P}[x,y]=x$.
\begin{observation}\label{Observation-hamiltonian}
Let $G$ be a traceable graph of order $n$, and let $P=v_1,v_2,\ldots,v_n$ be a Hamilton path of $G$. Then  $G$ is hamiltonian in any of the following three situations.
\begin{itemize}
    \item[$(1)$] There exists $i\in [2,\,n-1]$ such that $v_i\sim v_1$ and $v_{i-1}\sim v_n$.
    \item[$(2)$]  For an integer $k\in [2,\,n-1]$, there exist $i\in [2,\,k]$ and $j\in [k,\,n-1]$ such that $v_i\sim v_1 $, $v_j\sim v_n$ and $v_{i-1}\sim v_{j+1}$. Note that $i=j$ is possible here.
    \item[$(3)$] For an integer $k\in [2,\,n-1]$, there exist $i\in [k,\,n-1]$ and $j\in [1,\,k-1] $ such that $v_i\sim v_1 $, $v_j\sim v_n$ and $v_{i+1}\sim v_{j+1}$. Note that $i=j+1$ is possible here.
\end{itemize}
\end{observation}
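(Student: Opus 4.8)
The plan is to prove each of the three situations by exhibiting an explicit Hamilton cycle built from the Hamilton path $P = v_1, v_2, \ldots, v_n$ together with the given chords. Since $P$ already visits every vertex, in each case I only need to reroute $P$ using the prescribed edges so that the result is a single spanning cycle. This is purely a combinatorial bookkeeping argument with no degree or bipartite-hole input needed, so the main work is to describe the cycle cleanly and check that every vertex is used exactly once.

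For situation $(1)$, the edges $v_1 v_i$ and $v_{i-1} v_n$ let me take the cycle
\begin{equation*}
v_1,\, v_i,\, v_{i+1},\, \ldots,\, v_n,\, v_{i-1},\, v_{i-2},\, \ldots,\, v_1,
\end{equation*}
that is, follow $\overrightarrow{P}[v_i, v_n]$, jump along the chord $v_n v_{i-1}$, follow $\overleftarrow{P}[v_1, v_{i-1}]$ back to $v_1$, and close up with the chord $v_1 v_i$. This uses each vertex once, so $G$ is hamiltonian. For situation $(2)$, with $i \le j$ I combine the three chords $v_1 v_i$, $v_n v_j$ and $v_{i-1} v_{j+1}$: starting at $v_1$, traverse $\overleftarrow{P}$ nothing, then the idea is to take $\overrightarrow{P}[v_1,v_{i-1}]$, use $v_{i-1} v_{j+1}$ to jump forward, take $\overrightarrow{P}[v_{j+1}, v_n]$, use $v_n v_j$ to jump back, traverse $\overleftarrow{P}[v_i, v_j]$ down to $v_i$, and finally close with $v_i v_1$; the segments $[v_1, v_{i-1}]$, $[v_i, v_j]$ and $[v_{j+1}, v_n]$ partition $V(P)$, so this is a Hamilton cycle. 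The case $i = j$ degenerates gracefully because then the middle segment is the single vertex $v_i = v_j$.

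Situation $(3)$ is symmetric in flavour: with the chords $v_1 v_i$, $v_n v_j$, $v_{i+1} v_{j+1}$ and $j \le i-1$, I expect to assemble the cycle from the segments $[v_1, v_j]$, $[v_{j+1}, v_i]$ and $[v_{i+1}, v_n]$, using $v_n v_j$ and $v_1 v_i$ to attach the two outer segments to the endpoints and $v_{i+1} v_{j+1}$ to splice the middle segment in with the correct orientation. Concretely I would try $v_1, \overrightarrow{P}[v_1,\ldots], v_n$ rearranged as follow $\overrightarrow{P}[v_1, v_j]$, jump via $v_j v_n$, traverse $\overleftarrow{P}[v_{i+1}, v_n]$, jump via $v_{i+1} v_{j+1}$, traverse $\overrightarrow{P}[v_{j+1}, v_i]$, and close with $v_i v_1$; again the three segments partition the vertices and the degenerate case $i = j+1$ collapses the middle segment to nothing.

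The only genuine obstacle is orientation bookkeeping: in each construction I must verify that the chords connect the right endpoints of consecutive segments, so that following them produces a closed walk traversing each segment in a consistent direction rather than a disconnected union or a walk that repeats a vertex. I would carry this out by listing, for each of the three cases, the ordered sequence of vertices in the cycle and checking that consecutive pairs are either edges of $P$ or one of the named chords, and that the vertex multiset equals $V(G)$. A small diagram for each case would make the routing transparent and handle the noted degenerate subcases $i=j$ and $i=j+1$ uniformly.
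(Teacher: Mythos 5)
Your constructions are correct and are exactly the standard rerouting arguments the paper intends (it states the observation without proof): in each case your three segments partition $V(G)$ and the named chords splice them into a single spanning cycle, with the degenerate cases $i=j$ and $i=j+1$ handled correctly. One trivial slip: in situation $(3)$ with $i=j+1$ the middle segment $\overrightarrow{P}[v_{j+1},v_i]$ collapses to the single vertex $v_i=v_{j+1}$ (and the chord $v_{i+1}v_{j+1}$ becomes a path edge), not ``to nothing,'' but the cycle is still valid.
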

Observation~\ref{Observation-hamiltonian}~$(1)$ yields a standard proof of Dirac's and Ore's theorems. 
 Observation~\ref{Observation-hamiltonian}~$(2)$ involves noncrossing edges from the endpoints, 
whereas Observation~\ref{Observation-hamiltonian}~$(3)$ involves crossing edges.

\begin{proof}[\bf Proof of Theorem \ref{Theorem-ore-hamiltonian}.]
We prove Theorem \ref{Theorem-ore-hamiltonian} by contradiction. Let $G$ be a counterexample to Theorem \ref{Theorem-ore-hamiltonian} with size maximum.  Then $G$ is not a complete graph, and hence $\widetilde{\alpha}(G)\geq 2$.  For any two nonadjacent vertices $u,v$
of $G$, let $G_{uv}$ be the graph obtained from $G$ by adding a new edge $uv$. Note that adding edges does not increase the bipartite-hole-number. Therefore, by the choice of $G$, we have that $G_{uv}$ is hamiltonian. Note that $G$ is non-hamiltonian. This implies that $uv$ is included in every Hamilton cycle of $G_{uv}$. Moreover, there is a Hamilton $(u,v)$-path in $G$. Now, let $P$ be a Hamilton $(u,v)$-path in $G$ and assume that $P$ is chosen such that  $\min\{d_G(u), d_G(v)\}$ is as large as possible. For convenience, assume that $P=v_1,v_2,\ldots,v_n$ with $v_1=u$ and $v_n=v$, and that $d_G(v_1)\leq d_G(v_n)$.

Let $s\in [t]$ satisfy $\widetilde{\alpha}(G)+1=s+t$, and  assume that $G$ has no  $(s,t)$-bipartite-hole. Since $\widetilde{\alpha}(G)\geq 2$, $1\leq s\leq \frac{\widetilde{\alpha}(G)+1}{2}<\widetilde{\alpha}(G)$.
We complete the proof of Theorem \ref{Theorem-ore-hamiltonian} by considering the following two cases.

\begin{case}
$d_G(v_1)\geq \widetilde{\alpha}(G)$.
\end{case}
Since  $1\leq s <\widetilde{\alpha}(G)$, there exists an integer $k\in [2,\,n-1]$ such that $|N_G(v_1)\cap \{v_i:~i\in [2,\,k]\}|=s$. Denote $S_1=N_G(v_1)\cap \{v_i:~i\in [2,\,k]\}$, $S_2=N_G(v_1)\cap \{v_i:~i\in [k+1,\,n-1]\}$, $T_1=N_G(v_n)\cap\{v_j:~j\in [k,\,n-1]\}$ and $T_2=N_G(v_n)\cap\{v_j:~j\in [2,\,k-1]\}$. Clearly, $N_G(v_1)$ is the disjoint union of $S_1$ and $S_2$, and $N_G(v_n)$ is the disjoint union of $T_1$ and $T_2$.

On the one hand, by Observation \ref{Observation-hamiltonian}~$(2)$, we have that
\begin{flalign*}
[S_1^-, T_1^+]=\emptyset.
\end{flalign*}
Note that  $d_G(v_n)\geq d_G(v_1) \geq \widetilde{\alpha}(G)$. Since $G$ has no $(s,t)$-bipartite-hole, $|T_1|\leq t-1$.  However, it follows that 
\begin{flalign*}
    |T_2|\geq d_G(v_n)-(t-1)\geq \widetilde{\alpha}(G)-(t-1)=s.
\end{flalign*}

On the other hand, by Observation \ref{Observation-hamiltonian}~$(1)$ and $(3)$, we have that 
\begin{flalign*}
    [S_2^+\cup \{v_1\},\, T_2^+]=\emptyset.
\end{flalign*}
Since $G$ has no $(s,t)$-bipartite-hole,
\begin{flalign*}
    |S_2^+|=|S_2^+\cup\{v_1\}|-1\leq (t-1)-1=t-2.
\end{flalign*}
This implies that 
\begin{flalign*}
\widetilde{\alpha}(G)\leq d_G(v_1)=|S_1|+|S_2|\leq s+t-2=\widetilde{\alpha}(G)-1,
\end{flalign*}
 a contradiction.
 \begin{case}
    $d_G(v_1)<\widetilde{\alpha}(G).$
 \end{case}
 Since $G$ is $2$-connected, $v_1$ has a neighbor distinct from $v_2$. Let $v_t$ be a neighbor of $v_1$, and choose $t$ to be as large as possible.  Now, $v_{t-1}, \overleftarrow{P}[v_{t-1}, v_1],v_1,v_t,\overrightarrow{P}[v_t,  v_n], v_n$  is a Hamilton path with endpoints $v_{t-1}$ and $v_n$.  By the choice of $P$, we have that $d_G(v_{t-1})\leq d_G(v_1)<\widetilde{\alpha}(G)$. Since $\sigma_2(G)\geq 2\widetilde{\alpha}(G)$, $v_{1}\sim v_{t-1}$. Repeating the process over and over again,
 it follows that $d_G(v_i)<\widetilde{\alpha}(G)$ and $v_i\nsim v_n$ for each $i\in [t-1]$. Since $\sigma_2(G)\geq 2\widetilde{\alpha}(G)$, $\{v_1,v_2,\ldots,v_{t-1} \}$ is a clique. Since $G$ is $2$-connected,
 \begin{flalign*}
    [\{v_1,v_2,\ldots,v_{t-1}\},\, \{v_{t+1},v_{t+2},\ldots,v_n\}]\neq \emptyset.
 \end{flalign*}
Therefore, there exist $j\in [2,\, t-1]$ and $j'\in [t+1,\,n-1]$ such that $v_j\sim v_{j'}$. Now,
\begin{flalign*}
    v_{j'-1},\overleftarrow{P}[v_{j'-1},  v_{j+1}],v_{j+1},v_1,\overrightarrow{P}[v_{1},  v_{j}],v_j,v_{j'}, \overrightarrow{P}[v_{j'},  v_n],v_n
\end{flalign*}
is a Hamilton path with endpoints $v_{j'-1}$ and $v_n$. By the choice of $P$, we have that 
\begin{flalign*}
 d_G(v_{j'-1})\leq d_G(v_1)<\widetilde{\alpha}(G).   
\end{flalign*}
Therefore, $\{v_1,v_2,\ldots,v_{t-1} \}\cup \{v_{j'-1}\}$ is a clique. This implies that $j'=t+1$. Now, 
\begin{flalign*}
    v_t,\overleftarrow{P}[v_t,v_{j+1}],v_{j+1},v_1,\overrightarrow{P}[v_1,v_{j}],v_j,v_{t+1},\overrightarrow{P}[v_{t+1},  v_n],v_n
\end{flalign*}
is a Hamilton path with endpoints $v_t$ and $v_n$. By the choice of $P$, we have that 
\begin{flalign*}
d_G(v_{t})\leq d_G(v_1)=t-1<t\leq d_G(v_{t}),
\end{flalign*}
 a contradiction. This completes the proof of Theorem \ref{Theorem-ore-hamiltonian}.
\end{proof}
For two graphs $G$ and $H,$ $G\vee H$ denotes the {\it join} of $G$ and $H,$ which is obtained from the disjoint union $G+H$ by adding edges joining every vertex of $G$ to every vertex of $H.$ The following trick is well-known (e.g., \cite{Chvatal1972}, p.112).
\begin{lemma}\label{Lemma-traceable-hamiltonian}
Let $G$ be a graph and denote $H=G\vee K_1$. Then $G$ is traceable if and only if $H$ is hamiltonian, and $\kappa(G)=k$ if and only if $\kappa(H)=k+1$.
\end{lemma}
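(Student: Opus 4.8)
The plan is to treat the two equivalences separately, writing $w$ for the single vertex of $K_1$ in $H = G \vee K_1$, so that $w$ is adjacent to every vertex of $G$.

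For the traceability--hamiltonicity equivalence I would argue directly in both directions. If $G$ is traceable with Hamilton path $P = v_1 v_2 \cdots v_n$, then since $w$ is adjacent to both endpoints $v_1$ and $v_n$, the cycle $v_1 v_2 \cdots v_n w v_1$ is a Hamilton cycle of $H$. Conversely, if $H$ has a Hamilton cycle $C$, then $C$ passes through $w$; deleting $w$ from $C$ together with its two incident edges on $C$ leaves a Hamilton path of $G$ joining the two $C$-neighbours of $w$, so $G$ is traceable. Both directions are essentially immediate and require no case analysis.

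For the connectivity statement I would establish the clean identity $\kappa(H) = \kappa(G) + 1$, which is exactly the assertion that $\kappa(G) = k$ if and only if $\kappa(H) = k+1$. The key observation is that $w$ lies in every vertex cut of $H$: if $S$ is a set with $w \notin S$, then $w \in V(H - S)$ and $w$ remains adjacent to all other vertices of $H - S$, so $H - S$ is connected and $S$ is not a cut. Consequently, if $S$ is a minimum cut of $H$ then $w \in S$ and $H - S = G - (S \setminus \{w\})$ is disconnected, whence $S \setminus \{w\}$ is a cut of $G$ and $\kappa(H) = |S| \geq \kappa(G) + 1$. For the reverse inequality, taking a minimum cut $S'$ of $G$ and noting that $H - (S' \cup \{w\}) = G - S'$ is disconnected yields $\kappa(H) \leq |S'| + 1 = \kappa(G) + 1$.

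The main subtlety, and the only genuine obstacle, is the case where $G$ is complete, since then $G$ admits no disconnecting set in the usual sense and one must invoke the convention $\kappa(K_m) = m - 1$. Here $H = K_{n+1}$, and the identity still holds directly because $\kappa(K_{n+1}) = n = (n-1) + 1 = \kappa(G) + 1$. I would dispose of this boundary case first and then run the cut-based argument above under the assumption that $G$ is not complete, where vertex cuts behave in the standard way.
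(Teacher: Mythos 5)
Your proof is correct, and in fact the paper offers no proof of this lemma at all --- it is stated as a well-known trick with a citation to Chv\'atal--Erd\H{o}s --- so your argument (closing a Hamilton path of $G$ into a cycle of $H$ through the join vertex $w$, and showing $\kappa(H)=\kappa(G)+1$ via the observation that every vertex cut of $H$ must contain $w$, with the complete case handled by convention) is precisely the standard argument the citation stands in for. The only blemish is the degenerate case $|G|=1$, where $G$ is trivially traceable but $H=K_2$ has no Hamilton cycle; this boundary failure is inherent to the lemma as stated rather than to your proof, and it is harmless for the paper's application in Corollary~\ref{Corollary-ore-traceable}.
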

\begin{proof}[\bf Proof of Corollary \ref{Corollary-ore-traceable}.]
   Let $ H = G \vee K_1 $. By the definition of the bipartite-hole-number, we have $ \widetilde{\alpha}(H) = \widetilde{\alpha}(G) $. It follows that

\[
\sigma_2(H) \geq \sigma_2(G) + 2 \geq 2 \widetilde{\alpha}(G) = 2 \widetilde{\alpha}(H).
\]

Applying Theorem~\ref{Theorem-ore-hamiltonian}, we conclude that $ H $ is hamiltonian. Therefore, by Lemma~\ref{Lemma-traceable-hamiltonian}, $ G $ is traceable.
\end{proof}

\section{Proof of Theorem \ref{Theorem-ore-hamiltonian-connected}}\label{Proof-hamiltonian-connected}

We prove Theorem \ref{Theorem-ore-hamiltonian-connected} by contradiction. Let $G$ be a counterexample to Theorem \ref{Theorem-ore-hamiltonian-connected} with size maximum.  Then $G$ is non-complete, and hence $\widetilde{\alpha}(G)\geq 2$.  For any two nonadjacent vertices $x,y$ of $G$, let $G_{xy}$ be the graph obtained from $G$ by adding a new edge $xy$. Note that adding edges does not increase the bipartite-hole-number. Therefore, by the choice of $G$, we have that $G_{xy}$ is hamiltonian-connected. 

Let $u,v\in V(G)$ be any two distinct vertices. We may assume that $G$ has no Hamilton $(u,v)$-path. For every edge $e\in E(\overline{G})$, the graph $G_e$ is hamiltonian-connected. This implies that every Hamilton $(u,v)$-path contains the edge $e$, and that $u\sim v$. Let $P$ be a Hamilton $(u,v)$-path in $G_e$ that contains the edge $e=xy$, and assume that $e$ is chosen such that $\min\{d_G(x), d_G(y)\}$ is as large as possible.

For convenience, assume that $P = v_1, v_2, \ldots, v_n$, where $v_1 = u$ and $v_n = v$, and the edge $e=v_r v_{r+1}$, with $d_G(v_{r+1}) \geq d_G(v_{r})$.

Let $s\in [t]$ satisfy $\widetilde{\alpha}(G)+1=s+t$, and  assume that $G$ has no  $(s,t)$-bipartite-hole. 
Since $\widetilde{\alpha}(G)\geq 2$, $1\leq s\leq \frac{\widetilde{\alpha}(G)+1}{2}<\widetilde{\alpha}(G)$.
We complete the proof of Theorem \ref{Theorem-ore-hamiltonian-connected} by considering the following two cases.

\begin{case}
$d_G(v_{r})\geq \widetilde{\alpha}(G)+1$.
\end{case}
Since $s< \widetilde{\alpha}(G)$, there exists an integer $k\in [1,\,n]$ such that $|N_G(v_r)\cap \{v_i:~i\in [1,\,k]\}|=s$. We further assume that $k$ is chosen to be the minimum. This implies that $v_rv_k\in E(G)$. Denote $S_1=N_G(v_r)\cap \{v_i:~i\in [1,\,k]\}$. That is, $|S_1|=s$. Since $v_rv_{r+1}\notin E(G)$, either $k\in [1,\,r-1]$ or $k\in [r+2,\,n]$. We distinguish two subcases.

\begin{subcase}
$k\in [1,\,r-1]$.
\end{subcase}

Denote $T_{1}=N_G(v_{r+1})\cap \{v_j:~j\in [k+1,\,r-1]\}$ and $R_1=N_G(v_{r+1})\cap \{v_j:~j\in [r+2,\,n]\}$.
Suppose $|T_1\cup R_1|\geq t$.  Then $|T_1^+\cup R_1^-|\geq t$. Since $G$   has no $(s,t)$-bipartite-hole, 
\begin{flalign*}
    [S_1^+,\, T_1^+\cup R_1^-]\neq \emptyset.
\end{flalign*}
If $[S_1^+,\, T_1^+]\neq \emptyset$, then there exist $v_j\in S_1$ and $v_{j'}\in T_1$ such that $v_j^+v_{j'}^+\in E(G)$. However, it follows that 
\begin{flalign*}
v_1,\overrightarrow{P}[v_1,v_j],v_j,v_r, \overleftarrow{P}[v_r,v_{j'}^+],v_{j'}^+,v_j^+,\overrightarrow{P}[v_j^+,v_{j'}],v_{j'},v_{r+1},\overrightarrow{P}[v_{r+1},v_n],v_n
\end{flalign*}
is a Hamilton $(u,v)$-path in $G$, a contradiction.

If $[S_1^+,\, R_1^-]\neq \emptyset$, then there exist $v_j\in S_1$ and $v_{j'}\in R_1$ such that $v_j^+v_{j'}^-\in E(G)$. However, it follows that 
\begin{flalign*}
    v_1,\overrightarrow{P}[v_1,v_j],v_j,v_r, \overleftarrow{P}[v_r,v_{j}^+],v_{j}^+,v_{j'}^-,\overleftarrow{P}[v_{j'}^-,v_{r+1}],v_{r+1},v_j',\overrightarrow{P}[v_j',v_n],v_n
\end{flalign*}
is a Hamilton $(u,v)$-path in $G$, a contradiction. Therefore, $|T_1\cup R_1|\leq t-1$. 

Denote $S_2=N_G(v_{r})\cap \{v_i:~i\in [k+1,\,r-1]\}$, $Z_2=N_G(v_r)\cap \{v_i:~i\in [r+2,n]\}$  and $T_2=N_G(v_{r+1})\cap \{v_j:~j\in [2,\,k]\}$. 

Since $d_G(v_{r+1}) \geq d_G(v_{r})\geq \widetilde{\alpha}(G)+1$, 
$$|T_2|\ge d_G(v_{r+1})-|T_1\cup R_1|-1\geq \widetilde{\alpha}(G)+1-(t-1)-1=s.$$
Suppose $|S_2\cup Z_2|\geq t$. Then $|S_2^+\cup Z_2^-|\geq t$. Since $G$ has no $(s,t)$-bipartite-hole, 
\begin{flalign*}
[S_2^+\cup Z_2^-, T_2^-]\neq \emptyset.
\end{flalign*}

If $[S_2^+,\, T_2^-]\neq \emptyset$, then there exist $v_j\in S_2$ and $v_{j'}\in T_2$ such that $v_j^+v_{j'}^-\in E(G)$. However, it follows that 
\begin{flalign*}
    v_1,\overrightarrow{P}[v_1,v_{j'}^-],v_{j'}^-,v_j^+,\overrightarrow{P}[v_j^+,v_r],v_r,v_j,\overleftarrow{P}[v_j,v_{j'}],v_{j'},v_{r+1},\overrightarrow{P}[v_{r+1},v_n],v_n
\end{flalign*}
is a Hamilton $(u,v)$-path in $G$, a contradiction.

If $[Z_2^-,\, T_2^-]\neq \emptyset$, then there exist $v_j\in Z_2$ and $v_{j'}\in T_2$ such that $v_j^-v_{j'}^-\in E(G)$. However, it follows that 
\begin{flalign*}
  v_1,\overrightarrow{P}[v_1,v_{j'}^-],v_{j'}^-,v_j^-,\overleftarrow{P}[v_j^-,v_{r+1}],v_{r+1},v_{j'},\overrightarrow{P}[v_{j'},v_r],v_r,v_j,\overrightarrow{P}[v_j,v_n],v_n
\end{flalign*}
is a Hamilton $(u,v)$-path in $G$, a contradiction.

Therefore, $|S_2\cup Z_2|\leq t-1$. However, it follows that 
\begin{flalign*}
d_G(v_{r})=|S_1|+|S_2\cup Z_2|\leq s+t-1<s+t=\widetilde{\alpha}(G)+1\leq d_G(v_{r}),
\end{flalign*}
 a contradiction.
\begin{subcase}
    $k\in [r+2,\, n]$.
\end{subcase}
Denote $S_3=N_G(v_r)\cap \{v_i:~i\in [k,\,n]\}$. Recall that $|S_1|=s$ and $v_rv_k\in E(G)$. Since $d_G(v_r)\ge \widetilde{\alpha}(G)+1$, we have 
$$|S_3|=d_G(v_r)+1-|S_1|\ge \widetilde{\alpha}(G)+2-s=t+1.$$
Then there exists an integer $k'\in [k,\,n]$ such that $|N_G(v_r)\cap \{v_i:~i\in [k',\,n]\}|=s+1$. We further assume that $k'$ is chosen to be the maximum. This implies that $v_rv_{k'}\in E(G)$. Furthermore, denote $Z_3=N_G(v_r)\cap \{v_i:~i\in [k',\,n-1]\}$, $T_3=N_G(v_{r+1})\cap \{v_j:~j\in [r+2,k'-1]\}$ and $R_3=N_G(v_{r+1})\cap \{v_j:~j\in [1,r-1]\}$.

Suppose $|T_3\cup R_3|\geq t$. Then $|T_3^-\cup R_3^+|\geq t$. Since $G$ has no $(s,t)$-bipartite-hole,
\begin{flalign*}
    [Z_3^+,\,T_3^-\cup R_3^+]\neq \emptyset.
\end{flalign*}

If $[Z_3^+,\,T_3^-]\neq \emptyset$, then there exist $v_j\in Z_3$ and $v_{j'}\in T_3$ such that $v_j^+v_{j'}^-\in E(G)$. However, it follows that 
\begin{flalign*}
  v_1,\overrightarrow{P}[v_1,v_{r}],v_r,v_j, \overleftarrow{P}[v_j,v_{j'}],v_{j'},v_{r+1},\overrightarrow{P}[v_{r+1},v_{j'}^-], v_{j'}^-,v_j^+, \overrightarrow{P}[v_j^+,v_{n}],v_n
\end{flalign*}
is a Hamilton $(u,v)$-path in $G$, a contradiction.    

If $[Z_3^+,\,R_3^+]$, then there exist $v_j\in Z_3$ and $v_{j'}\in R_3$ such that $v_j^+v_{j'}^+\in E(G)$. However, it follows that 
\begin{flalign*}
v_1,\overrightarrow{P}[v_1,v_{j'}],v_{j'},v_{r+1},\overrightarrow{P}[v_{r+1},v_j],v_j,v_r,\overleftarrow{P}[v_r,v_{j'}^+],v_{j'}^+,v_j^+,\overrightarrow{P}[v_j^+,v_n],v_n
\end{flalign*}
is a Hamilton $(u,v)$-path in $G$, a contradiction. Therefore, $|T_3\cup R_3|\leq t-1$.

Denote $T_4=N_G(v_{r+1})\cap \{v_j:~j\in [k',\,n]\}$. Since $d_G(v_{r+1}) \geq d_G(v_{r})\geq \widetilde{\alpha}(G)+1$,
\begin{flalign*}
    |T_4|=d_G(v_{r+1})-|T_3\cup R_3|\geq \widetilde{\alpha}(G)+1-(t-1)=s+1.
\end{flalign*}
Then there exists an integer $k^*\in [k',\,n]$ such that $|N_G(v_{r+1})\cap \{v_j:~j\in [k^*,\,n]\}|=s$. We further assume that $k^*$ is chosen to be the maximum. This implies that $v_{r+1}v_{k^*}\in E(G)$ and $k^*\geq k'+1$.

Denote $R_4=N_G(v_{r+1})\cap \{v_j:~j\in [k^*,\,n]\}$, $S_4=N_G(v_r)\cap \{v_i:~i\in [1,\,r-1]\}$ and $Z_4=N_G(v_r)\cap \{v_i:~i\in [r+2,\,k']\}$. Suppose $|S_4\cup Z_4|\geq t$. Then $|S_4^+\cup Z_4^-|\geq t$. Since $G$ has no $(s,t)$-bipartite-hole,
\begin{flalign*}
[R_4^-,\, S_4^+\cup Z_4^-]\neq \emptyset. 
\end{flalign*}
If $[R_4^-,\, S_4^+]\neq \emptyset$, then there exist $v_j\in R_4^-,$ and $v_{j'}\in  S_4^+$ such that $v_j^-v_{j'}^+\in E(G)$. However, it follows that 
\begin{flalign*}
v_1,\overrightarrow{P}[v_1,v_{j'}],v_{j'},v_r,\overleftarrow{P}[v_r,v_{j'}^+],v_{j'}^+,v_{j}^-,\overleftarrow{P}[v_{j}^-,v_{r+1}],v_{r+1},v_j,\overrightarrow{P}[v_j,v_n],v_n
\end{flalign*}
is a Hamilton $(u,v)$-path in $G$, a contradiction.

If $[R_4^-,\, Z_4^-]\neq \emptyset$, then there exist $v_j\in R_4^-,$ and $v_{j'}\in  Z_4^-$ such that $v_j^-v_{j'}^-\in E(G)$. However, it follows that 
\begin{flalign*}
v_1,\overrightarrow{P}[v_1,v_{r}],v_r,v_{j'},\overrightarrow{P}[v_{j'},v_{j}^-],v_j^-,v_{j'}^-,\overleftarrow{P}[v_{j'}^-,v_{r+1}],v_{r+1},v_j,\overrightarrow{P}[v_j,v_n],v_n
\end{flalign*}
is a Hamilton $(u,v)$-path in $G$, a contradiction. Therefore, $|S_4\cup Z_4|\leq t-1$. This implies that 
\begin{flalign*}
d_G(v_{r})=|S_4|+|Z_4|+|N_G(v_r)\cap \{v_i:~i\in [k',\,n-1]\}|-1\leq t-1+s<\widetilde{\alpha}(G)+1\leq d_G(v_{r}),
\end{flalign*}
a contradiction.
\begin{case}
$d_G(v_r)\leq \widetilde{\alpha}(G)$.
\end{case}
Denote $X=\{v_i:~d_G(v_i)\leq \widetilde{\alpha}(G)\}$ and $Y=\{v_j:~d_G(v_j)\geq \widetilde{\alpha}(G)+1\}$. By the choice of the edge $e$ and $\sigma_2(G)\geq 2\widetilde{\alpha}(G)+1$, we have that $X$ is a clique and $Y$ is a clique. Since $G$ is $3$-connected, there exist three vertex disjoint $(X,Y)$-paths $P_1,P_2,P_3$. If $|X|\ge 3$ and $|Y|\ge 3$, then $\{P_1,P_2,P_3\}$ is a matching of cardinality $3$. If $|X|\le 2$ or $|Y|\le 2$, combining the fact that $G$ is $3$-connected, then $|N_{G[Y]}(X)|\ge 3$ or $|N_{G[X]}(Y)|\ge 3$.  
In any case, $G$ is hamiltonian-connected.
This completes the proof of Theorem \ref{Theorem-ore-hamiltonian-connected}.\hfill$\Box$



\section*{Acknowledgement}  The authors are grateful to Professor Xingzhi Zhan for his constant support and guidance. This research  was supported by the NSFC grant 12271170 and Science and Technology Commission of Shanghai Municipality (STCSM) grant 22DZ2229014.

\section*{Declaration}

	\noindent$\textbf{Conflict~of~interest}$
	The authors declare that they have no known competing financial interests or personal relationships that could have appeared to influence the work reported in this paper.
	
	\noindent$\textbf{Data~availability}$
	Data sharing not applicable to this paper as no datasets were generated or analysed during the current study.

\end{document}